\numberwithin{equation}{section}
\newtheorem{theorem}{Theorem}[section]
\newtheorem{corollary}[theorem]{Corollary}
\newtheorem{proposition}[theorem]{Proposition}
\newtheorem{lemma}[theorem]{Lemma}
\renewcommand{\leq}{\leqslant}
\renewcommand{\geq}{\geqslant}
\newcommand{\dup}{\mathrm{d}}
\newcommand{\iup}{\hspace{1pt}\mathrm{i}\hspace{1pt}}
\newcommand{\Int}{\int\limits}
\newcommand{\abs}[1]{\lvert#1\rvert}
\newcommand{\la}{\lambda}
\newcommand{\La}{\Lambda}
\newcommand{\Part}{\mathscr{P}}
\newcommand{\twocore}[1]{2\textup{-core}(#1)}
\renewcommand{\H}{\mathcal{H}}
\newcommand{\Ho}[1]{\mathcal{H}^{\mathrm{o}}_{#1}}
\newcommand{\He}[1]{\mathcal{H}^{\mathrm{e}}_{#1}}
\DeclareMathOperator*{\pf}{Pf}
\begin{document}
\title[Littlewood identities]
{Proof of some Littlewood identities conjectured by Lee, Rains and Warnaar}
\author{Seamus P. Albion}
\address{Fakult\"{a}t f\"{u}r Mathematik, Universit\"{a}t Wien, 
Oskar-Morgenstern-Platz 1, A-1090 Vienna, Austria}
\email{seamus.albion@univie.ac.at}

\subjclass[2020]{05E05, 33D05, 33D52}

\keywords{empty $2$-core,
Koornwinder polynomials, Littlewood identities, Schur functions}

\begin{abstract}
We prove a novel pair of Littlewood identities for Schur functions,
recently conjectured by Lee, Rains and Warnaar in the Macdonald case, 
in which the sum is over partitions with empty 2-core. 
As a byproduct we obtain a new Littlewood identity in the spirit of 
Littlewood's original formulae.
\end{abstract}

\date{}
\maketitle

\section{Introduction}
The classical Littlewood identities are the following three summation 
formulae for Schur functions:
\begin{subequations}\label{Eq_classical}
\begin{gather}\label{Eq_classical1}
\sum_{\la} s_\la(x)=\prod_{i\geq 1}\frac{1}{1-x_i}
\prod_{i<j}\frac{1}{1-x_ix_j}, \\
\sum_{\substack{\la\\\text{$\la$ even}}}s_\la(x)
=\prod_{i\geq 1}\frac{1}{1-x_i^2}\prod_{i<j}\frac{1}{1-x_ix_j},
\label{Eq_classical2} \\
\sum_{\substack{\la\\\text{$\la'$ even}}}s_\la(x)
=\prod_{i<j}\frac{1}{1-x_ix_j}, \label{Eq_classical3}
\end{gather} 
\end{subequations}
where $x=(x_1,x_2,x_3,\dots)$ is a countable alphabet.
Here and throughout the rest of the paper 
``$\la$ even'' means the partition $\la$ has only even
parts and $\la'$ denotes the conjugate of $\la$.
These identities were first written down together by Littlewood 
\cite[p.~238]{Littlewood40},
however \eqref{Eq_classical1} was already known to Schur
\cite{Schur18}.
They have since afforded many far-reaching 
generalisations and have found applications in areas such as 
combinatorics, representation theory and elliptic hypergeometric series.
In particular there are many generalisations of \eqref{Eq_classical}
at the Schur level \cite{Bressoud00,Des86,HKKO23,IW99,JZ01,Kawanaka99,Okada98,
Proctor90,Stembridge90}.
Also see \cite{RW21} for comprehensive references to the literature.

The purpose of this note is to prove the Schur 
function case of a pair of Littlewood identities for Macdonald polynomials 
recently conjectured by Lee, Rains and Warnaar \cite[Conjecture~9.5]{LRW20}.
To state these we need some notation. 
Denote the multiset of hook lengths of a partition $\la$ by $\H_\la$.
We refine this by writing $\H^{\mathrm{e}/\mathrm{o}}_\la$ 
for the submultiset of even/odd hook lengths.
The standard infinite $q$-shifted factorial is given by
$(a;q)_\infty:=\prod_{i\geq 0}(1-aq^i)$ and we define a statistic
\begin{equation}\label{Eq_varsigma}
\varsigma(\la):=\sum_{(i,j)\in\la}(-1)^{\la_i+\la_j'-i-j+1}(\la_i-i),
\end{equation}
in terms of the Young diagram of $\la$; see Subsection~\ref{Sec_parts} below.
Finally, let $\hat\La_\mathbb{F}$ denote the completion of the ring of 
symmetric functions over the field $\mathbb{F}$ with respect to the natural 
grading by degree.

\begin{theorem}\label{Thm_q=t}
As identities in $\hat\La_{\mathbb{Q}(q)}$ at the alphabet
$x=(x_1,x_2,x_3,\dots)$ we have that
\begin{equation}\label{Eq_L1}
\sum_{\substack{\la\\\twocore{\la}=0}}
q^{\varsigma(\la)}\frac{\prod_{h\in\Ho{\la}}(1-q^h)}{\prod_{h\in\He{\la}}(1-q^h)}
s_\la(x)=\prod_{i\geq 1}\frac{(qx_i^2;q^2)_\infty}{(x_i^2;q^2)_\infty}
\prod_{i<j}\frac{1}{1-x_ix_j},
\end{equation}
and
\begin{equation}\label{Eq_L2}
\sum_{\substack{\la\\\twocore{\la}=0}}
q^{\varsigma(\la')}
\frac{\prod_{h\in\Ho{\la}}(1-q^h)}{\prod_{h\in\He{\la}}(1-q^h)}s_\la(x)=
\prod_{i\geq 1}\frac{(q^2x_i^2;q^2)_\infty}{(qx_i^2;q^2)_\infty}
\prod_{i<j}\frac{1}{1-x_ix_j}.
\end{equation}
\end{theorem}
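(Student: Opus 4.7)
The plan is to reduce the sum on the left-hand side to a double sum over pairs $(\mu,\nu)\in\Part\times\Part$ via the Littlewood $2$-quotient bijection: partitions $\la$ with empty $2$-core correspond to such pairs, with $|\la|=2(|\mu|+|\nu|)$. A standard consequence of this bijection is the identification of the multiset of even hook lengths, $\He{\la}=\{2h\dpunt h\in\H_\mu\}\sqcup\{2h\dpunt h\in\H_\nu\}$, so that
\[
\prod_{h\in\He{\la}}(1-q^h)=\prod_{h\in\H_\mu}(1-q^{2h})\prod_{h\in\H_\nu}(1-q^{2h}),
\]
which immediately handles the denominator of the weight. My strategy is to work out analogous expressions for all the remaining pieces in the summand.

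To bring $s_\la(x)$ itself into the picture I would use the classical factorisation
\[
s_\la(y_1,-y_1,y_2,-y_2,\dots)\;=\;\pm\,s_\mu(y_1^2,y_2^2,\dots)\,s_\nu(y_1^2,y_2^2,\dots),
\]
valid precisely for $\la$ of empty $2$-core with $2$-quotient $(\mu,\nu)$. Coupling this with the Cauchy identity should convert the generating function of $s_\la(x)$ over empty-$2$-core $\la$ into a bilinear sum in $s_\mu$ and $s_\nu$ in auxiliary variables. The factor $\prod_{i<j}(1-x_ix_j)^{-1}$ on the right-hand side is the RHS of the classical Littlewood identity \eqref{Eq_classical3}, so the natural move is to split it off at the outset; the claim \eqref{Eq_L1} then reduces to matching a bilinear sum over $(\mu,\nu)$ against the $q$-Pochhammer product $\prod_i(qx_i^2;q^2)_\infty/(x_i^2;q^2)_\infty$, which in turn admits a Schur- or Cauchy-type expansion via the $q$-binomial theorem.

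The hardest step is expected to be the combinatorial bookkeeping for the remaining pieces of the weight under the $2$-quotient: the odd-hook product $\prod_{h\in\Ho{\la}}(1-q^h)$ and the statistic $b(\la)$ are genuine cross-terms that do \emph{not} factor over $(\mu,\nu)$ alone. I would attack both by writing them explicitly in terms of the beta-numbers of $\mu$ and $\nu$ (split by parity, as in the bialternant setup), and then show that these cross-contributions conspire, via the Schur factorisation, to yield precisely the clean $q$-Pochhammer product on the right. Finally, the dual identity \eqref{Eq_L2} should follow from \eqref{Eq_L1} by applying the involution $\omega:s_\la\mapsto s_{\la'}$---using that the set of empty-$2$-core partitions and the hook multisets are conjugation-invariant while $b(\la)$ and $b(\la')$ swap roles---or, equivalently, by running the same $2$-quotient argument with $\la$ replaced by $\la'$ throughout.
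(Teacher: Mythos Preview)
Your route is genuinely different from the paper's. The paper never touches the $2$-quotient bijection; instead it proves a \emph{bounded} version of the identity with right-hand side $(x_1\cdots x_n)^m K_{(m^n)}(x;q,q;t_0,t_1,t_2,t_3)$ for specific $t_r$, by computing the Schur-expansion coefficients of this Koornwinder polynomial via a virtual Koornwinder integral (which in turn reduces to evaluating an explicit Pfaffian by a Cauchy-type determinant). The unbounded identities of Theorem~\ref{Thm_q=t} then drop out from the $m\to\infty$ limit of the Koornwinder polynomial. Your observation that \eqref{Eq_L2} follows from \eqref{Eq_L1} by applying $\omega$ is correct and is a pleasant shortcut the paper does not use (it treats the two cases in parallel).

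That said, your proposal has a real gap at its core. The $2$-quotient cleanly handles $\He{\la}$, but you yourself note that $\prod_{h\in\Ho{\la}}(1-q^h)$ and $b(\la)$ do \emph{not} factor through $(\mu,\nu)$; your plan to ``write them in terms of beta-numbers and show the cross-contributions conspire'' is precisely the whole difficulty and is not carried out. There is no a priori reason the cross-terms simplify, and nothing in your outline explains the mechanism that would make them collapse to the product side. Separately, the Schur factorisation $s_\la(y,-y,\dots)=\pm s_\mu(y^2)s_\nu(y^2)$ only holds at that particular specialisation of the alphabet; it does not by itself let you identify the coefficient of $s_\la(x)$ for general $x$. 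Your sentence ``coupling this with the Cauchy identity should convert\dots'' hides a nontrivial step: you would need to explain exactly which pairing or auxiliary-variable manoeuvre extracts the coefficient $q^{b(\la)}H_\la^{\mathrm{o}}(q)/H_\la^{\mathrm{e}}(q)$ from the right-hand side, and that is where the actual content of the proof lies. As written, the proposal identifies a plausible framework but stops short of the key computation.
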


The condition $\twocore{\la}=0$ generalises both the even row and even column 
conditions of \eqref{Eq_classical2} and \eqref{Eq_classical3}.
Indeed, by Lemma~\ref{Lem_even-rows} we have that $\varsigma(\la)=0$ if and only if 
$\la$ is even.
Thus when setting $q=0$ \eqref{Eq_L1} and \eqref{Eq_L2} collapse to
\eqref{Eq_classical2} and \eqref{Eq_classical3} respectively.
In this sense these identities are in the spirit of Kawanaka's 
identity \cite[Theorem~1.1]{Kawanaka99}
\[
\sum_\la \prod_{h\in\mathcal{H}_\la}\bigg(\frac{1+q^h}{1-q^h}\bigg)s_\la(x)
=\prod_{i\geq 1}\frac{(-qx_i;q)_\infty}{(x_i;q)_\infty}
\prod_{i<j}\frac{1}{1-x_ix_j},
\]
since this reduces to \eqref{Eq_classical1} when $q=0$.
Unlike Kawanaka's identity one can make sense of the $q\to1$ limit of
\eqref{Eq_L1} and \eqref{Eq_L2}. In either case we obtain the following
Littlewood-type identity.
\begin{corollary}\label{Cor}
As an identity in $\hat\La_\mathbb{Q}$ at the alphabet 
$x=(x_1,x_2,x_3,\dots)$,
\[
\sum_{\substack{\la\\\twocore{\la}=0}}
\frac{\prod_{h\in\Ho{\la}}h}{\prod_{h\in\He{\la}}h}s_\la(x)=
\prod_{i\geq 1}\frac{1}{(1-x_i^2)^{1/2}}
\prod_{i<j}\frac{1}{1-x_ix_j}.
\]
\end{corollary}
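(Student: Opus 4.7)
The plan is to deduce the Corollary by passing to the $q\to 1^-$ limit of identity \eqref{Eq_L1} from Theorem~\ref{Thm_q=t}; the argument via \eqref{Eq_L2} is entirely analogous and produces the same limit, providing a built-in consistency check for the pair.

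For the left-hand side the crucial combinatorial input is that $|\Ho{\la}|=|\He{\la}|=|\la|/2$ whenever $\twocore{\la}=0$. This follows from the standard fact that the number of hooks of $\la$ divisible by $d$ equals $|\la^{(0)}|+\cdots+|\la^{(d-1)}|$, the total size of the $d$-quotient; specialising to $d=2$ and using the empty $2$-core hypothesis yields $|\He{\la}|=|\la|/2$, and hence $|\Ho{\la}|=|\la|/2$ as well. Consequently, writing $1-q^h=(1-q)[h]_q$ with $[h]_q:=(1-q^h)/(1-q)$, the $(1-q)$-factors in the numerator and denominator of the hook ratio cancel exactly, so the ratio extends continuously to $q=1$ with value $\prod_{h\in\Ho{\la}}h/\prod_{h\in\He{\la}}h$. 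The factor $q^{b(\la)}$ trivially tends to $1$ since $b(\la)$ is a (bounded, integer-valued) statistic.

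For the right-hand side the only delicate factor is $(qx_i^2;q^2)_\infty/(x_i^2;q^2)_\infty$. Taking logarithms and expanding each $\log(1-y)$ in a geometric series gives
\[
\log\frac{(qx_i^2;q^2)_\infty}{(x_i^2;q^2)_\infty}
=-\sum_{n\geq 1}\frac{x_i^{2n}}{n(1+q^n)},
\]
which converges pointwise to $-\tfrac{1}{2}\log(1-x_i^2)$ as $q\to 1^-$; exponentiating produces the factor $(1-x_i^2)^{-1/2}$ appearing in the Corollary, and the remaining double product $\prod_{i<j}(1-x_ix_j)^{-1}$ is already $q$-independent.

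The one real subtlety is the interchange of the $q\to 1^-$ limit with the infinite summation over $\la$ on the left. I would handle this in the ring of formal power series in $x_1,x_2,\dots$ graded by total degree: in each fixed degree $N$ only finitely many $\la\vdash N$ contribute to the coefficient, and by the observation above each such contribution is a rational function of $q$ regular at $q=1$, so the limit may be evaluated coefficient-wise and reduces to the pointwise calculations above. This degree-by-degree reduction is the only non-routine step, and I expect the rest of the argument to amount to bookkeeping.
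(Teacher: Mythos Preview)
Your approach is essentially the paper's own: take $q\to 1$ in \eqref{Eq_L1} (or \eqref{Eq_L2}), use $\abs{\Ho{\la}}=\abs{\He{\la}}$ (the paper's Lemma~\ref{Lem_2core}) to cancel the $(1-q)$-factors on the sum side, and compute the product side directly---the paper applies the $q$-binomial theorem where you expand the logarithm, but both arrive at $(1-x_i^2)^{-1/2}$. One harmless sign slip: your displayed series should read $+\sum_{n\geq 1}x_i^{2n}/\bigl(n(1+q^n)\bigr)$ rather than $-\sum$, after which your stated limit $-\tfrac{1}{2}\log(1-x_i^2)$ and its exponential are correct.
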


The outline of the paper is as follows.
In the next section we give preliminaries regarding partitions, Schur
functions and Koornwinder polynomials.
In Section~\ref{Sec_integral} we prove a pair of vanishing integrals for
Schur functions again conjectured by Lee, Rains and Warnaar in the Macdonald
case \cite[Conjecture~9.2]{LRW20}.
Then, in Section~\ref{Sec_proofs}, we follow the techniques of
\cite{RW21} to prove the bounded analogues of Theorem~\ref{Thm_q=t}
conjectured in \cite[Conjecture~9.4]{LRW20}.
The theorem then follows by taking an appropriate limit.
We conclude with a derivation of Corollary~\ref{Cor}.

\section{Partitions and ($\mathrm{BC}_n$)-symmetric functions}\label{Sec_pre}
\subsection{Partitions}\label{Sec_parts}
A partition $\la=(\la_1,\la_2,\la_3,\dots)$ is a weakly decreasing sequence
of nonnegative integers such that finitely many $\la_i$ are nonzero.
The sum of the entries is denoted $\abs{\la}:=\la_1+\la_2+\la_3+\cdots$
and if $\abs{\la}=n$ we say $\la$ is a partition of $n$.
Nonzero entries are called parts, and the number of parts is called the length,
denoted $l(\la)$.
We denote by $\Part$ the set of all partitions and by $\Part_n$ the set of 
all partitions with length at most $n$.
In particular $\Part_0=\{0\}$ where $0$ denotes the unique partition of zero.
If $\la\in\Part_n$ we write $\la+\delta$ for the partition
$(\la_1+n-1,\la_2+n-2,\dots,\la_n)$.
The number $m_i(\la)$ of occurrences of an integer $i$ as a part of $\la$ is 
called the multiplicity.
Sometimes we express a partition in terms of its multiplicities as
$\la=(1^{m_1(\la)}2^{m_2(\la)}3^{m_3(\la)}\cdots)$.
We write $\mu\subset\la$ if the partition $\mu$ is contained in $\la$,
i.e.~if $\mu_i\leq \la_i$ for all $i\geq 1$.
If $\la\subseteq(m^n)$ for some nonnegative integers $m,n$, then
we write $(m^n)-\la$ for the complement of $\la$ inside $(m^n)$, that is,
$(m^n)-\la:=(m-\la_n,m-\la_{n-1},\dots,m-\la_1)$.
A partition is identified with its Young diagram, which is the
left-justified array of squares with $\la_i$ squares in 
row $i$ with $i$ increasing downward.
For example
\smallskip
\begin{center}
\begin{tikzpicture}[scale=0.4]
\foreach \i [count=\ii] in {6,4,3,1}
\foreach \j in {1,...,\i}{\draw (\j,1-\ii) rectangle (\j+1,-\ii);}
\end{tikzpicture}
\end{center}
is the Young diagram of $(6,4,3,1)$.
The conjugate of a partition, written $\la'$, is obtained by reflecting
the Young diagram in the main diagonal, so that $(6,4,3,1)'=(4,3,3,2,1,1)$.
The arm and leg lengths of a square $s=(i,j)\in\la$ are given by
\[
a(s):=\la_i-j \quad\text{and}\quad l(s):=\la_j'-i,
\]
which are the number of boxes strictly to the right and below $s$ respectively.
The hook length is the sum of these including $s$ itself, so that
$h(s):=a(s)+l(s)+1$.
Using the same example as above, in the Young diagram
\smallskip
\begin{center}
\begin{tikzpicture}[scale=0.4]
\filldraw[color=blue!60] (3,-1) rectangle (5,-2);
\filldraw[color=red!60] (2,-2) rectangle (3,-3);
\foreach \i [count=\ii] in {6,4,3,1}
\foreach \j in {1,...,\i}{\draw (\j,1-\ii) rectangle (\j+1,-\ii);}
\draw (2.5,-1.5) node {$s$};
\end{tikzpicture}
\end{center}
we have labelled the square $s=(2,2)$ so that $a(s)=2$, $l(s)=1$ and
$h(s)=4$.
As in the introduction we denote the multiset of hook lengths of $\la$ by
$\mathcal{H}_\la$. This is further refined as $\He{\la}$ and $\Ho{\la}$, 
the multisets of hook lengths which are even or odd, respectively.
In terms of these we define the hook polynomials
\begin{align*}
H_\la(q)&:=\prod_{h\in\mathcal{H}_\la}(1-q^h) \\
H_\la^{\mathrm{e}/\mathrm{o}}(q)&:=\prod_{h\in\mathcal{H}^{
\mathrm{e}/\mathrm{o}}_\la}(1-q^h),
\end{align*}
which are invariant under conjugation of $\la$.
For $z\in\mathbb{C}$ we also need the content polynomials
\begin{align*}
C_\la(z;q)&:=\prod_{(i,j)\in\la}(1-zq^{j-i}) \\
C_\la^{\mathrm{e}/\mathrm{o}}(z;q)&:=\prod_{
\substack{(i,j)\in\la\\\text{$i+j$ even/odd}}}(1-zq^{j-i}).
\end{align*}

In this paper we will frequently encounter partitions with empty $2$-core,
written $\twocore{\la}=0$.
One definition of such partitions is that their diagrams may be tiled by
dominoes. Our running example of $(6,4,3,1)$ has empty $2$-core since 
it admits the tiling 
\smallskip
\begin{center}
\begin{tikzpicture}[scale=0.4]
\foreach \i [count=\ii] in {6,4,3,1}
\foreach \j in {1,...,\i}{\draw (\j,1-\ii) rectangle (\j+1,-\ii);}
\draw[ultra thick,color=blue!60] (1.5,-0.5) -- (1.5,-1.5);
\draw[ultra thick,color=blue!60] (1.5,-2.5) -- (1.5,-3.5);
\draw[ultra thick,color=blue!60] (2.5,-1.5) -- (2.5,-2.5);
\draw[ultra thick,color=blue!60] (3.5,-1.5) -- (3.5,-2.5);
\draw[ultra thick,color=blue!60] (2.5,-0.5) -- (3.5,-0.5);
\draw[ultra thick,color=blue!60] (4.5,-0.5) -- (4.5,-1.5);
\draw[ultra thick,color=blue!60] (5.5,-0.5) -- (6.5,-0.5);
\end{tikzpicture}
\end{center}
which is clearly not unique.
We will now give some conditions which are equivalent to $\la$ having empty
$2$-core which all easily follow by induction on $\abs{\la}$.
The reader interested in more general statements involving Littlewood's 
decomposition of a partition into its $r$-core and $r$-quotient for all
$r\geq 2$ may consult, for example, \cite{Littlewood51} 
or \cite[p.~12--15]{Macdonald95}.
\begin{lemma}\label{Lem_2core}
For $\la\in\Part_{2n}$ the following are equivalent:
\begin{enumerate}
\item $\twocore{\la}=0$.
\item $\abs{\Ho{\la}}=\abs{\He{\la}}=n$.
\item The set
\[
\{\la_1+2n-1,\la_2+2n-2,\dots,\la_{2n-1}+1,\la_{2n}\}
\]
contains $n$ even and $n$ odd integers.
\end{enumerate}
\end{lemma}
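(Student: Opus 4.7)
All three conditions are equivalent formulations of the classical $2$-abacus characterisation of partitions with empty $2$-core. Introducing the \emph{beta sequence}
\[
\bmu=(\mu_1,\dots,\mu_{2m}):=(\la_1+2m-1,\,\la_2+2m-2,\,\dots,\,\la_{2m}),
\]
a strictly decreasing sequence of $2m$ nonnegative integers, and letting $E,O$ denote respectively the number of even and odd entries of $\bmu$, condition (3) becomes $E=O$. This parity balance is independent of the choice of $m$: going from $m$ to $m+1$ shifts every $\mu_i$ by $+2$ (preserving parity) and appends one new odd value $1$ and one new even value $0$.

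Following the hint in the statement I would argue by induction on $\abs{\la}$, the crux being that each of the three quantities is invariant under the removal of a domino. Condition (1) is invariant by the very definition of the $2$-core. For (3), a horizontal domino in row $i$ sends $\mu_i\mapsto\mu_i-2$ (parity preserved), while a vertical domino in rows $i,i+1$ with $\la_i=\la_{i+1}$ sends $(\mu_i,\mu_{i+1})=(\mu_i,\mu_i-1)$ to $(\mu_i-1,\mu_i-2)$, a pair with the same parity multiset. For (2), removing a horizontal domino eliminates the two hooks of lengths $1$ and $2$ and decrements by $1$ the hooks of the $2(i-1)$ cells $(i',\la_i-1),(i',\la_i)$ with $i'<i$; pairing by $i'$, one checks that the two hook lengths in each such pair differ by exactly $1$, so the parity multiset is preserved under the simultaneous decrement. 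The vertical case is handled by a parallel pairing across the two rows $i,i+1$ of the domino, together with the separate observation that the $i-1$ cells in column $\la_i$ strictly above the domino have hooks decrementing by $2$ (and hence preserving parity).

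Invariance reduces the lemma to the case $\la=\twocore{\la}$, which is either empty (whereupon all three conditions hold trivially with $n=0$) or a staircase $(k,k-1,\dots,1)$ for some $k\geq 1$. In the latter case, a direct computation of the hook lengths gives the multiset $\{1^k,3^{k-1},5^{k-2},\dots,(2k-1)^1\}$, all odd, so that (2) fails; the beta sequence is likewise easily evaluated to yield $|O-E|=k$ or $k+1$ according to the parity of $k$, so (3) fails; and (1) fails by hypothesis. The main obstacle will be the careful bookkeeping in the invariance argument for (2), particularly in the vertical case, but all the required cancellations stem from the single structural observation that adjacent cells in a column of $\la$ always have hook lengths differing by exactly $1$.
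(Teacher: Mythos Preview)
The paper does not give a detailed proof of this lemma; it only remarks that the equivalences ``easily follow by induction on $\abs{\la}$'' and cites \cite{LRW20} for (1)$\Leftrightarrow$(3) and Littlewood's core--quotient decomposition for (2). Your approach follows precisely this inductive scheme, and the overall architecture---reduce to the $2$-core via domino invariance, then check the staircase case directly---is sound and standard.

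There is, however, a genuine omission in your invariance argument for (2) in the horizontal case. When you remove the horizontal domino from row $i$, you correctly account for the two deleted hooks $1,2$ and for the $2(i-1)$ cells $(i',\la_i-1),(i',\la_i)$ above the domino, but you forget the $\la_i-2$ cells $(i,j)$ with $1\leq j\leq \la_i-2$ that remain in row $i$ itself. Each of these has its arm length drop by $2$ while its leg length is unchanged, so its hook decreases by $2$ and parity is preserved. This is exactly the horizontal analogue of the ``column above the domino'' observation you made in the vertical case, and without it the bookkeeping does not close.

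A smaller point: your closing remark that ``adjacent cells in a column of $\la$ always have hook lengths differing by exactly $1$'' is false in general; the difference is $(\la_i-\la_{i+1})+1$. It happens to equal $1$ in your application only because the vertical domino forces $\la_i=\la_{i+1}$, and dually the row-adjacent pair $(i',\la_i-1),(i',\la_i)$ differ by $1$ only because the removability condition $\la_{i+1}\leq\la_i-2$ forces those two columns to have equal height $i$. Your specific uses are therefore correct, but the stated principle should be restricted accordingly.
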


\subsection{Auxiliary results}
Here we prove some properties of the statistic $\varsigma(\la)$ 
\eqref{Eq_varsigma}.
Firstly, as we have already used in the introduction, we have the following
characterisation of the vanishing of $\varsigma(\la)$.
\begin{lemma}\label{Lem_even-rows}
Let $\twocore{\la}=0$. Then $\varsigma(\la)\geq 0$ with $\varsigma(\la)=0$ if and 
only if $\la$ is even.
\end{lemma}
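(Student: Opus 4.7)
The plan is to exploit the beta-sequence characterization of empty $2$-core from Lemma~\ref{Lem_2core}(3). Fix $N=2n \geq \ell(\la)$, set $\beta_i := \la_i + N - i$ for $1 \leq i \leq N$, and list the even entries of $(\beta_i)$ decreasingly as $E_1 > \cdots > E_n$ and the odd entries as $O_1 > \cdots > O_n$. Since $N$ is even, $\la_i$ is even iff $\beta_i \equiv i \pmod 2$, so $\la$ is even iff $\beta_{2k-1}$ is odd and $\beta_{2k}$ is even for every $k$; equivalently, iff the (automatically strictly decreasing) merged sequence reads
\[
O_1 > E_1 > O_2 > E_2 > \cdots > O_n > E_n.
\]

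The first step is to reorganize $b(\la)$. Using $h(i,j) \equiv \la_i + \la'_j + i + j + 1 \pmod 2$ and exchanging the order of summation yields
\[
b(\la) = -\sum_{i=1}^{N}(-1)^{\beta_i}(\la_i - i)\,P_{\la_i},\qquad P_m := \sum_{j=1}^{m}(-1)^{\la'_j + j}.
\]
Since $\la'$ also has empty $2$-core, applying Lemma~\ref{Lem_2core}(3) to $\la'$ forces $P_m = 0$ for all sufficiently large even $m$, and $P_{\la_i}=0$ when $\la_i=0$, so the sum over $i$ is effectively finite.

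The key step is to derive the closed form
\[
b(\la) = \sum_{i=1}^{n}\bigl[(E_i - O_i)_+ + (O_{i+1} - E_i)_+\bigr]\qquad(O_{n+1} := -\infty),
\]
where $(x)_+ := \max(x,0)$. Both conclusions of the lemma are then immediate: the right-hand side is a sum of nonnegative integers, so $b(\la) \geq 0$, and it vanishes precisely when $O_i > E_i > O_{i+1}$ for every $i$, which is the even-$\la$ criterion from the first paragraph. The main obstacle is proving this closed form. I would do so by induction on $|\la|$ via domino removal: passing from $\la$ to a partition $\mu\subset\la$ with $|\mu|=|\la|-2$ and empty $2$-core changes the $\beta$-sequence either by decreasing a single $\beta_i$ by $2$ (horizontal domino) or by decreasing adjacent entries $\beta_i,\beta_{i+1}$ by $1$ each (vertical domino, which swaps their parities). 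The effect on the right-hand side is straightforward; the combinatorially tedious part is verifying the matching change in $b(\la)$, obtained from the reorganized expression above by tracking the shift in hook lengths in the affected row and columns.
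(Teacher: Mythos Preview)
Your approach is sound and genuinely different from the paper's. The paper argues directly on the defining sum for $b(\la)$: it observes that for an even partition each row contributes zero (equal numbers of even and odd hooks), and if $\la$ is not even it pairs up the odd rows two at a time---using the fact that, under the empty $2$-core hypothesis, consecutive odd rows are separated by an \emph{even} number of even rows---and shows that each such block contributes a strictly positive amount. This is short and requires nothing beyond the alternating-sign structure of the terms $\la_i-i$.

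Your route via the $\beta$-sequence and the closed form
\[
b(\la)=\sum_{i=1}^{n}\Big[(E_i-O_i)_+ + (O_{i+1}-E_i)_+\Big]
\]
is more ambitious: it gives an exact expression for $b(\la)$ rather than just its sign, and the nonnegativity and the even-$\la$ characterisation drop out instantly. The price is that the hard step---establishing the closed form---is only sketched. Your domino-removal induction is the natural strategy and will work (both horizontal and vertical removals amount, at the multiset level, to lowering a single $\beta$-entry by~$2$, which shifts one $E_k$ or one $O_k$ down by~$2$), but tracking the change in $b(\la)$ through your reorganised expression still requires a careful case split on which $E$ or $O$ moves and where it sits relative to its neighbours. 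So what you gain in information you pay for in bookkeeping; the paper's pairing argument gets the lemma with less effort, while yours would additionally yield a formula that might be of independent interest.
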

\begin{proof}
If $\la$ is even then $\varsigma(\la)=0$ since the number of even and odd hook 
lengths in each row is equal. Assume that $\la$ is not even. Then $\la$ has an 
even number of odd parts. Let $\la_{i_1}$, $\la_{i_2}$ be the final two odd 
rows of $\la$. Since $\twocore{\la}$ is empty these must be separated by an 
even number of even rows (possibly zero).
Ignoring the rows above, the contribution to $\varsigma(\la)$ below and
including row
$\la_{i_1}$ may be computed as
\[
\la_{i_1}-\la_{i_2}+i_2-i_1+2\sum_{j=i_1+1}^{i_2-1}(-1)^{i_1+j-1}(\la_j-j).
\]
Since the numbers $\la_j-j$ are strictly decreasing this sum is positive.
The next nonzero contribution to $\varsigma(\la)$ will come from the pair of
odd rows above in the same fashion.
Thus repeating the above shows that if $\la$ has empty $2$-core and contains at 
least two odd rows then $\varsigma(\la)>0$.
\end{proof}
Note that $\varsigma((2,1,1,1))=0$, so that $\varsigma(\la)$ may vanish for partitions with 
nonempty $2$-core. 

\begin{lemma}\label{Lem_n1}
For $\la\in\mathscr{P}_{2n}$ there holds
\begin{equation}\label{Eq_alternativeForm}
\varsigma(\la)=\sum_{(i,j)\in\la+\delta}(-1)^{\la_i-i-j+1}(\la_i-i)-
\sum_{1\leq i<j\leq 2n}(-1)^{\la_i-\la_j+j-i}(\la_i-i).
\end{equation}
Moreover, if $\twocore{\la}=0$, then
\begin{equation}\label{Eq_conjForm}
\varsigma(\la')=
\frac{\abs{\la}}{2}-n^2-n+\sum_{1\leq i<j\leq 2n}
(-1)^{\la_i-\la_j+j-i}(\la_j-j).
\end{equation}
\end{lemma}
\begin{proof}
We interpret the definition of $\varsigma(\la)$ as a sum over the Young 
diagram of 
$\la$ where each square has weight $(-1)^{\la_i+\la_j'-i-j+1}(\la_i-i)$.
In the Young diagram of $\la+\delta$ place the integer 
$(-1)^{\la_i-i-j+1}(\la_i-i)$ in box $(i,j)$.
Summing over $i,j$ gives the first sum on the right of 
\eqref{Eq_alternativeForm}.
To identify the second sum, we remove the columns with index $\la_j+2n-j+1$
for $2\leq j\leq 2n$ whose entries are
$(-1)^{\la_i-\la_j+j-i}(\la_i-i)$. 
The remaining diagram is that of $\la$ with entries
$(-1)^{\la_i+\la_j'-i-j+1}(\la_i-i)$, which shows the first identity.

The proof of the second identity is similar.
Note that by \eqref{Eq_varsigma}, $\varsigma(\la')$ may be written as
\[
\varsigma(\la')=\sum_{(i,j)\in\la}(-1)^{\la_i+\la_j'-i-j+1}(\la_j'-j).
\]
We thus fill the diagram of $\la+\delta$ with integers
$(-1)^{\la_i-i-j+1}(2n-j)$, so that removing the same columns as before now
gives
\[
\varsigma(\la')=\sum_{(i,j)\in\la+\delta}(-1)^{\la_i-i-j+1}(2n-j)
-\sum_{1\leq i<j\leq 2n}(-1)^{\la_i-\la_j+j-i}(j-\la_j-1).
\]
A simple calculation shows that for $\twocore{\la}=0$,
\[
\sum_{(i,j)\in\la+\delta}(-1)^{\la_i-i-j+1}(2n-j)+
\sum_{1\leq i<j\leq 2n}(-1)^{\la_i-\la_j+j-i}=\frac{\abs{\la}}{2}-n^2-n,
\]
completing the proof.
\end{proof}

\subsection{Schur functions}
For completeness we give a definition of the Schur functions in terms
of the classical ratio of alternants.
For $\la\in\Part_n$ the Schur function is defined as
\[
s_\la(x_1,\dots,x_n):=\frac{\det_{1\leq i,j\leq n}(x_i^{\la_j+n-j})}
{\det_{1\leq i,j\leq n}(x_i^{n-j})},
\]
and $s_\la(x_1,\dots,x_n):=0$ for $l(\la)>n$.
The set of the $s_\la(x_1,\dots,x_n)$ indexed over $\Part_n$ forms a $\mathbb{Z}$-basis 
for the ring of symmetric functions in $n$ variables, denoted $\La_n$.
We also use the Schur functions in countably many variables
$x=(x_1,x_2,x_3,\dots)$, such as in Theorem~\ref{Thm_q=t},
which may be defined by the Jacobi--Trudi determinant \cite[p.~41]{Macdonald95}.
The set of such $s_\la(x)$ when indexed over all partitions $\la$ form a
$\mathbb{Z}$-basis for the ring of symmetric functions $\La$.
We also require the ring $\hat\La$ which is the completion of $\La$
with respect to the natural grading by degree \cite[p.~66]{Rains05}.

Several of the results we need below are best stated in terms of Macdonald
polynomials, which are a $q,t$-analogue of the Schur functions 
\cite[\S VI]{Macdonald95}.
We simply note that the Macdonald polynomials $P_\la(x;q,t)$ are a basis for 
$\La_{\mathbb{Q}(q,t)}$ and reduce to the Schur functions 
when $q=t$, i.e., $P_\la(x;q,q)=s_\la(x)$.

\subsection{Koornwinder polynomials and integrals}

The Koornwinder polynomials are a family of $\mathrm{BC}_n$-symmetric 
functions depending on six parameters first introduced by Koornwinder
\cite{Koornwinder92} as a multivariate analogue of the Askey--Wilson
polynomials \cite{AW85}.
Here we write $x=(x_1,\dots,x_n),
x^{\pm}=(x_1,x_1^{-1},\dots,x_n,x_n^{-1})$ and for a 
single-variable function $g(x_i)$ we set
\begin{align*}
g\big(x_i^\pm\big)&:=g(x_i)g\big(x_i^{-1}\big) \\
g\big(x_i^\pm x_j^\pm\big)&:=g(x_ix_j)g\big(x_i^{-1}x_j\big)
g\big(x_ix_j^{-1}\big)g\big(x_i^{-1}x_j^{-1}\big).
\end{align*}
Below the function will be one of $g(x_i)=(x_i;q)_\infty$ or $g(x_i)=(1-x_i)$.
Also for the infinite $q$-shifted factorial we adopt the usual multiplicative
notation
\[
(a_1,\dots,a_n;q)_\infty:=(a_1;q)_\infty\cdots (a_n;q)_\infty.
\]

Let $W:=\mathfrak{S}_n \ltimes (\mathbb{Z}/2\mathbb{Z})^n$ be the group of 
signed permutations on $n$ letters.  
A Laurent polynomial $f(x)\in\mathbb{C}[x^\pm]$ 
is called $\mathrm{BC}_n$-symmetric if it is 
invariant under the natural action of $W$ on the $n$ variables where
the reflections act by $x_i\mapsto 1/x_i$.
For $\la\in\mathscr{P}_n$ define the orbit-sum indexed by $\la$ as
\[
m_\la^{\mathrm{BC}}(x):=\sum_\alpha x^\alpha,
\]
where the sum is over all elements $\alpha$ of the $W$-orbit of $\la$,
the reflections act on sequences by $\alpha_i\mapsto-\alpha_i$, and
$x^\alpha:=x_1^{\alpha_1}\cdots x_n^{\alpha_n}$.
The orbit-sums form a basis for the ring $\La_n^{\mathrm{BC}}$ of 
$\mathrm{BC}_n$-symmetric functions.
For $q,t,t_0,t_1,t_2,t_3\in\mathbb{C}$ with
$\abs{q},\abs{t},\abs{t_0},\abs{t_1},\abs{t_2},\abs{t_3}<1$, 
define the Koornwinder density by
\[
\Delta(x;q,t;t_0,t_1,t_2,t_3)
:=\prod_{i=1}^n\frac{(x_i^{\pm 2};q)_\infty}
{\prod_{r=0}^3(t_rx_i^{\pm};q)_\infty}
\prod_{1\leq i<j\leq n}\frac{(x_i^{\pm}x_j^{\pm};q)_\infty}
{(tx_i^{\pm}x_j^{\pm};q)_\infty}.
\]
This further allows one to define an inner product on $\La_n^{\mathrm{BC}}$
by
\[
\langle f,g\rangle_{q,t;t_0,t_1,t_2,t_3}^{(n)}
:=\Int_{\mathbb{T}^n} f(x)g(x^{-1})\Delta(x;q,t;t_0,t_1,t_2,t_3)\,\dup T(x),
\]
where $\mathbb{T}^n$ is the standard $n$-torus and 
the measure $T(x)$ is given by
\[
\dup T(x):=\frac{1}{2^nn!(2\pi\iup)^n}\frac{\dup x_1}{x_1}\cdots 
\frac{\dup x_n}{x_n}.
\]
The Koornwinder polynomials are defined to be the unique 
$\mathrm{BC}_n$-symmetric functions satisfying
\[
K_\la = m_\la^{\mathrm{BC}} + \sum_{\mu<\la} c_{\la \mu} m_\mu^{\mathrm{BC}},
\]
where $c_{\la \mu} \in \mathbb{C}(q,t,t_0,t_1,t_2,t_3)$, and for which
\[
\langle K_\la,K_\mu\rangle_{q,t;t_0,t_1,t_2,t_3}^{(n)} = 0
\qquad\text{if $\la\neq \mu$}.
\]
Note that $\mu\leq \la$ denotes the extension of the usual dominance order
to all partitions $\la,\mu\in\Part$:
$\mu\leq\la$ if and only if $\mu_1+\cdots+\mu_i\leq\la_1+\cdots+\la_i$ for all
$i\geq 1$.
The Koornwinder polynomials satisfy many nice properties such as 
the quadratic norm evaluation and evaluation symmetry 
\cite{vanDiejen96,Sahi99}.
The key identity we need is \cite[Equation~(2.6.9)]{RW21} (see 
also \cite[Corollary~7.2.1]{Rains05})
\begin{multline}\label{Eq_m-limit}
\lim_{m\to \infty} (x_1\dots x_n)^m
K_{(m^n)-\la}(x;q,t;t_0,t_1,t_2,t_3) \\
=P_\la(x;q,t)
\prod_{i=1}^n\frac{(t_0x_i,t_1x_i,t_2x_i,t_3x_i;q)_\infty}{(x_i^2;q)_\infty}
\prod_{1\leq i<j\leq n}\frac{(tx_ix_j;q)_\infty}{(x_ix_j;q)_\infty}.
\end{multline}
We will only use this for $\la=0$, in which case $P_0(x;q,t)=1$.

For a basis $\{f_\la\}$ of $\La_n^{\mathrm{BC}}$
we write $[f_\la]g$ for the coefficient of $f_\la$ in the expansion
$g=\sum_\la c_\la f_\la$ where the $c_\la$ lie in some coefficient ring.
The virtual Koornwinder integral of a $\mathrm{BC}_n$-symmetric function
$f$ is defined as
\[
I^{(n)}_K(f;q,t;t_0,t_1,t_2,t_3):=[K_0(x;q,t;t_0,t_1,t_2,t_3)]f.
\]
This is extended to allow for symmetric function arguments via the homomorphism
$\La_{2n}\longrightarrow\La_n^{\mathrm{BC}}$ for which
$f(x_1,\dots,x_{2n})\mapsto f(x_1,x_1^{-1},\dots,x_n,x_n^{-1})$.
Of course since $K_0=1$ the orthogonality of the Koornwinder polynomials allows
us to express this as
\[
I^{(n)}_K(f;q,t;t_0,t_1,t_2,t_3)
=\frac{\langle f, 1\rangle_{q,t;t_0,t_1,t_2,t_3}^{(n)}}
{\langle 1,1\rangle_{q,t;t_0,t_1,t_2,t_3}^{(n)}}.
\]
Note that the denominator has the explicit evaluation
\[
\langle 1,1\rangle_{q,t;t_0,t_1,t_2,t_3}^{(n)}
=\prod_{i=1}^n\frac{(t,t_0t_1t_2t_3t^{n+i-2};q)_\infty}
{(q,t^i;q)_\infty\prod_{0\leq r<s\leq 3}(t_rt_st^{i-1};q)_\infty},
\]
which is Gustafson's generalised Askey--Wilson integral \cite{Gustafson90}.
The virtual Koornwinder integral can be evaluated for many choices of
the argument $f$, see \cite{LRW20,Rains05,RV07,RW21}.
In particular, the vanishing integrals of the next section may be 
expressed in terms of virtual Koornwinder integrals.
We need one final identity involving virtual Koornwinder integrals.
To state this conveniently, let
\[
f^{(m)}_\la(q,t;t_0,t_1,t_2,t_3)
:=[P_\la(x;q,t)](x_1\cdots x_n)^m K_{(m^n)}(x;q,t;t_0,t_1,t_2,t_3).
\]
\begin{proposition}[{\cite[Proposition~4.9]{RW21}}]\label{Prop_coef}
For nonnegative integers $n,m$ and $\la\subseteq (2m)^n$,
\[
f_\la^{(m)}(q,t;t_0,t_1,t_2,t_3)=(-1)^{\abs{\la}}
I_K^{(m)}\big(P_{\la'}(t,q);t,q;t_0,t_1,t_2,t_3\big).
\]
\end{proposition}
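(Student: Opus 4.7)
My plan is to extract the coefficient $f_\la^{(m)}$ as a Koornwinder-type integral by first realising it as a Macdonald inner product and then rewriting that inner product via a Cauchy-kernel duality that exchanges $(q,t)\leftrightarrow(t,q)$.

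Since the Macdonald polynomials $\{P_\la(x;q,t):\la\subseteq(2m)^n\}$ form an orthogonal basis for the space of $S_n$-symmetric polynomials in $x_1,\dots,x_n$ of per-variable degree at most $2m$, and since $(x_1\cdots x_n)^m K_{(m^n)}(x;q,t;t_0,\dots,t_3)$ lies in that space, the definition of $f_\la^{(m)}$ gives
\[
f_\la^{(m)}(q,t;t_0,\dots,t_3)=\frac{\langle(x_1\cdots x_n)^m K_{(m^n)}(x;q,t;t_0,\dots,t_3),\,Q_\la(x;q,t)\rangle_{q,t}^{(n)}}{\langle P_\la(x;q,t),\,Q_\la(x;q,t)\rangle_{q,t}^{(n)}},
\]
where $Q_\la$ is the $P$-dual Macdonald basis. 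The goal is to transform this ratio of Macdonald inner products into the single virtual Koornwinder integral $(-1)^{|\la|}I_K^{(m)}(P_{\la'}(t,q);t,q;t_0,\dots,t_3)$.

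The mechanism is a Cauchy-type kernel that couples the $x$-variables at $(q,t;t_0,\dots,t_3)$ with auxiliary $y$-variables at the swapped parameters $(t,q;t_0,\dots,t_3)$. Starting from the finite dual Cauchy identity
\[
\prod_{i=1}^n\prod_{j=1}^{2m}(1+x_iz_j)=\sum_{\la\subseteq(2m)^n}P_\la(x;q,t)\,Q_{\la'}(z;t,q),
\]
specialised at $z=(y_1,y_1^{-1},\dots,y_m,y_m^{-1})$, one obtains a doubly $BC$-symmetric kernel in $x$ and $y$. Re-expanding the same kernel in the shifted Koornwinder basis by combining the limit formula \eqref{Eq_m-limit} with the interpolation identity of \cite[Corollary~7.2.1]{Rains05}, and then equating coefficients of $P_\la(x;q,t)$, yields an identity in $y$ whose integration against the Koornwinder density at $(t,q;t_0,\dots,t_3)$ collapses to the claimed formula. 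The sign $(-1)^{|\la|}$ emerges from the passage between the $(1+xz)$-kernel of dual Cauchy and the $(1-\cdot)$-type factors appearing in the Koornwinder density.

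The main obstacle is the bookkeeping in the re-expansion step: one must pin down the correct form of the Koornwinder dual-Cauchy identity, verify that the parameters $(t_0,\dots,t_3)$ enter symmetrically on both sides, confirm the sign $(-1)^{|\la|}$, and check that only the $\la'$-term survives the final Koornwinder integration. These verifications are essentially the content of \cite[Section~4]{RW21} and hinge on the evaluation symmetry and quadratic norm of the Koornwinder polynomials.
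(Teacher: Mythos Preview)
The paper does not prove this proposition at all: it is quoted verbatim as \cite[Proposition~4.2]{RW21} and used as a black box. So there is no ``paper's own proof'' to compare against; the paper and your proposal are on the same footing in that both ultimately defer to \cite{RW21}.

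As a sketch of the argument actually given in \cite[\S4]{RW21}, your outline is in the right spirit---dual Cauchy plus Koornwinder orthogonality with the parameter swap $(q,t)\leftrightarrow(t,q)$---but several details are off. The finite dual Cauchy identity for Macdonald polynomials pairs $P_\la(x;q,t)$ with $P_{\la'}(z;t,q)$, not $Q_{\la'}$. The initial detour through a Macdonald inner product on the $x$-side is unnecessary: $f_\la^{(m)}$ is defined purely as a coefficient, and the argument in \cite{RW21} proceeds directly from the dual Cauchy expansion without that step. Finally, invoking the limit formula \eqref{Eq_m-limit} for the ``re-expansion in the shifted Koornwinder basis'' is not right---that formula is an $m\to\infty$ asymptotic, whereas what is needed here is a finite-$m$ identity (the branching/interpolation machinery from \cite{Rains05}). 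You acknowledge as much in your last paragraph, so your proposal is really a pointer to \cite{RW21} rather than an independent proof, which matches exactly how the paper treats the result.
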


\section{Vanishing integrals}\label{Sec_integral}
In this section we evaluate a pair of vanishing integrals for Schur functions
conjectured by Lee, Rains and Warnaar in the Macdonald case 
\cite[Conjecture~9.2]{LRW20}.

For $a,b,q\in\mathbb{C}$ with $\abs{a},\abs{b},\abs{q}<1$ we define
\begin{multline*}
I^{(n)}_\la(a,b;q)
:=\frac{1}{Z_n(a,b;q)}
\Int_{\mathbb{T}^n}s_\la\big(x_1^{\pm},\dots,x_n^{\pm}\big)
\prod_{i=1}^n
\frac{(x_i^{\pm2};q)_\infty}{(ax_i^{\pm2},bx_i^{\pm2};q^2)_\infty} \\
\times
\prod_{1\leq i<j\leq n}\big(1-x_i^{\pm}x_j^{\pm}\big)\, \dup T(x),
\end{multline*}
where $\la$ is a partition with length at most $2n$ and the normalising
factor is given by
\begin{align*}
Z_n(a,b;q)&:=
\Int_{\mathbb{T}^n}
\prod_{i=1}^n\frac{(x_i^{\pm2};q)_\infty}{(ax_i^{\pm2},bx_i^{\pm2};q^2)_\infty}
\prod_{1\leq i<j\leq n}\big(1-x_i^{\pm}x_j^{\pm}\big)\, \dup T(x) \\
&\hphantom{:}=\prod_{i=1}^n\frac{(abq^{n+i-2};q)_\infty}
{(q^i,-aq^{i-1},-bq^{i-1};q)_\infty(abq^{2i-2};q^2)_\infty^2}.
\end{align*}
Note that in terms of virtual Koornwinder integrals this is
\[
I_\la^{(n)}(a,b;q)=I^{(n)}_K(s_\la;q,q,a^{1/2},-a^{1/2},b^{1/2},-b^{1/2}).
\]
Lee, Rains and Warnaar prove the following properties of the above integral.
\begin{proposition}[{\cite[Proposition~9.3]{LRW20}}]
For $a,b,q\in\mathbb{C}$ with $\abs{a},\abs{b},\abs{q}<1$ 
and $\la$ a partition of length at most $2n$ the integral
$I^{(n)}_\la(a,b;q)$ vanishes unless $\twocore{\la}=0$.
Furthermore
\begin{subequations}\label{Eq_Pfaff}
\begin{align}\label{Eq_P1}
I^{(n)}_\la(q,q;q) &=
\prod_{i=1}^n\frac{(1-q^{2i-1})^{2n-2i+1}}{(1-q^{2i})^{2n-2i}} \\
&\quad\times\pf_{1\leq i,j\leq 2n}\bigg(\frac{q^{(\la_i-\la_j+j-i-1)/2}}
{1-q^{\la_i-\la_j+j-i}}\chi(\text{$\la_i-\la_j+j-i$ odd})\bigg), \notag
\end{align}
and
\begin{align}\label{Eq_P2}
I^{(n)}_\la(1,q^2;q)&= 
\frac{1}{2^{n-1}(1+q^n)}
\prod_{i=1}^n\frac{(1-q^{2i-1})^{2n-2i+1}}{(1-q^{2i})^{2n-2i}}\\
&\quad\times
\pf_{1\leq i,j\leq 2n}\bigg(\frac{1+q^{\la_i-\la_j+j-i}}
{1-q^{\la_i-\la_j+j-i}}\chi(\text{$\la_i-\la_j+j-i$ odd})\bigg). \notag
\end{align}
\end{subequations}
\end{proposition}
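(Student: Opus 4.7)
The strategy is to convert the $n$-fold torus integral defining $I^{(n)}_\la(a,b;q)$ into a $2n\times 2n$ Pfaffian via a de Bruijn-type identity, and then to extract both the vanishing result and the two evaluations from the parity structure of the Pfaffian entries.

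The key preparatory calculation is the identity
\[
\prod_{1\leq i<j\leq n}\bigl(1-x_i^\pm x_j^\pm\bigr)=\prod_{1\leq i<j\leq n}\bigl(x_i-x_j\bigr)\bigl(x_i-x_j^{-1}\bigr)\bigl(x_i^{-1}-x_j\bigr)\bigl(x_i^{-1}-x_j^{-1}\bigr),
\]
which follows from a direct pairwise expansion. Combined with the ratio-of-alternants formula $s_\la(x^\pm)=\det_{1\leq i,j\leq 2n}(y_i^{\la_j+2n-j})/\prod_{i<j}(y_i-y_j)$ in the $2n$ variables $y=(x_1,x_1^{-1},\dots,x_n,x_n^{-1})$, and with the factorisation $\prod_{i<j}(y_i-y_j)=\prod_i(x_i-x_i^{-1})\cdot\prod_{i<j}(1-x_i^\pm x_j^\pm)$, the integrand of $I^{(n)}_\la(a,b;q)$ collapses to
\[
\det_{1\leq i,j\leq 2n}\bigl(y_i^{\la_j+2n-j}\bigr)\cdot\prod_{i=1}^n\frac{\tilde w(x_i)}{x_i-x_i^{-1}},\qquad\tilde w(x):=\frac{(x^{\pm 2};q)_\infty}{(ax^{\pm 2},bx^{\pm 2};q^2)_\infty},
\]
in which rows $2k-1$ and $2k$ of the determinant depend only on $x_k$ and $x_k^{-1}$. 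The factor $(x^{\pm 2};q)_\infty$ in $\tilde w$ supplies $(1-x^{\pm 2})$, which cancels the apparent pole of $1/(x-x^{-1})$ and keeps the integrand regular on $\mathbb{T}$. This is precisely the shape required for a de Bruijn-type Pfaffian--Andr\'eief identity, which yields $Z_n(a,b;q)\cdot I^{(n)}_\la(a,b;q)=c_n\cdot\pf_{1\leq i,j\leq 2n}(A_{ij})$ for an explicit normalisation constant $c_n$, where
\[
A_{ij}=\Int_{\mathbb{T}}\frac{x^{\mu_i-\mu_j}-x^{\mu_j-\mu_i}}{x-x^{-1}}\,\tilde w(x)\,\frac{\dup x}{2\pi\iup x},\qquad\mu_j:=\la_j+2n-j.
\]

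A single parity argument then gives both the indicator in \eqref{Eq_P1}--\eqref{Eq_P2} and the vanishing assertion. Since $\tilde w$ depends only on $x^{\pm 2}$ it is invariant under $x\mapsto-x$, while $(x^d-x^{-d})/(x-x^{-1})$ transforms as $(-1)^{d+1}$ under the same map; hence $A_{ij}$ vanishes unless $d=\mu_i-\mu_j=\la_i-\la_j+j-i$ is odd. This produces the indicator $\chi(\la_i-\la_j+j-i\text{ odd})$. The matrix $(A_{ij})$ thereby acquires a block decomposition according to the parity of $\mu_i$, with zero blocks on the diagonal, and a Pfaffian with such structure vanishes unless the two parity classes have equal cardinality $n$; by Lemma~\ref{Lem_2core}(3) this is precisely the condition $\twocore\la=0$.

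For the explicit evaluations at $(a,b)=(q,q)$ and $(1,q^2)$, use $(x;q)_\infty=(x;q^2)_\infty(qx;q^2)_\infty$ to simplify $\tilde w$: at $(q,q)$ one has $\tilde w(x)=(x^{\pm 2};q^2)_\infty/(qx^{\pm 2};q^2)_\infty$, and at $(1,q^2)$ a parallel reduction applies. For odd $d$ the substitution $u=x^2$, which double-covers the torus, converts $A_{ij}$ into a contour integral of a ratio of two theta functions in base $q^2$ against a Laurent monomial in $u$. Summing residues at the geometric progression of simple poles inside the contour collapses to $q^{(d-1)/2}/(1-q^d)$ at $(q,q)$ and to $(1+q^d)/(1-q^d)$ at $(1,q^2)$, exactly as in \eqref{Eq_P1} and \eqref{Eq_P2}; the prefactors then follow by dividing through by the explicit product for $Z_n(a,b;q)$. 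I expect the main obstacle to lie in this last step: matching the geometric sums coming from both sides of the circle $\lvert u\rvert=1$ to the claimed closed forms requires careful telescoping and bookkeeping of phases and exponents, and it is this calculation that distinguishes the two specialisations and produces the asymmetric numerator $q^{(d-1)/2}$ in \eqref{Eq_P1} versus the symmetric $1+q^d$ in \eqref{Eq_P2}.
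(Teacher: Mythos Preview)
The paper does not actually prove this proposition: it is quoted from \cite[Proposition~9.3]{LRW20} with the attribution ``Lee, Rains and Warnaar prove the following properties of the above integral'', and no argument is supplied here. So there is no proof in the present paper against which to compare your attempt.

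That said, your outline is the standard route and is essentially what one finds in the cited source. The reduction of the $n$-fold integral to a $2n\times 2n$ Pfaffian via a de~Bruijn/Ishikawa--Wakayama identity, the parity argument giving the indicator $\chi(\la_i-\la_j+j-i\text{ odd})$ and the $2$-core vanishing (this block-off-diagonal structure is precisely what the paper exploits in the \emph{next} proposition to evaluate the Pfaffians as determinants), and the simplification of $\tilde w$ at the two specialisations are all correct. Your observation that for odd $d$ the factor $(x^d-x^{-d})/(x-x^{-1})$ is a Laurent polynomial in $x^2$ is what makes the substitution $u=x^2$ legitimate. The one piece of genuine work you leave as a sketch is the evaluation of the resulting one-variable integrals $A_{ij}$; these are indeed standard $q$-beta/theta-quotient contour integrals, computable either by residues as you indicate or as special cases of Ramanujan's ${}_1\psi_1$ summation. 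Completing that step and matching the normalisation against the explicit product for $Z_n(a,b;q)$ would finish the argument, but since the paper itself defers the whole proposition to \cite{LRW20}, there is nothing further to reconcile.
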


Lee, Rains and Warnaar also give a conjectural Macdonald polynomial analogue
of this proposition \cite[Conjecture~9.2]{LRW20}.
There the generalisations of \eqref{Eq_Pfaff} are explicit products.
Our next proposition gives the evaluation of the Pfaffians in the previous
proposition, verifying the conjecture of Lee, Rains and Warnaar for $q=t$.

\begin{proposition}
For $\la$ with length at most $2n$ and $2\text{-core}(\la)=0$,
\begin{equation}\label{Eq_int1}
I^{(n)}_\la(q,q;q)=q^{\varsigma(\la')}\frac{C_\la^{\mathrm{e}}(q^{2n};q)
H_\la^{\mathrm{o}}(q)}
{C_\la^{\mathrm{o}}(q^{2n};q)H_\la^{\mathrm{e}}(q)}
\end{equation}
and
\begin{equation}\label{Eq_int2}
I^{(n)}_\la(1,q^2;q)=q^{\varsigma(\la)}\frac{1+q^{n+2\varsigma(\la')-2\varsigma(\la)}}{1+q^n}\,
\frac{C_\la^{\mathrm{e}}(q^{2n};q)H_\la^{\mathrm{o}}(q)}
{C_\la^{\mathrm{o}}(q^{2n};q)H_\la^{\mathrm{e}}(q)}.
\end{equation}
\end{proposition}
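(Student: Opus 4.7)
The plan is to evaluate each Pfaffian in \eqref{Eq_Pfaff} in closed form and match the answer with the right-hand side of \eqref{Eq_int1} or \eqref{Eq_int2}. Setting $\mu_i := \la_i + 2n - i$, we have $\la_i - \la_j + j - i = \mu_i - \mu_j$, so the indicators in \eqref{Eq_P1}--\eqref{Eq_P2} kill every entry whose row and column indices carry $\mu$-values of the same parity. Since $\twocore{\la} = 0$, Lemma~\ref{Lem_2core}(3) guarantees that exactly $n$ of the $\mu_i$ are even and $n$ are odd. Writing $O = \{o_1 < \cdots < o_n\}$ and $E = \{e_1 < \cdots < e_n\}$ for the corresponding index sets, the $2n \times 2n$ Pfaffian collapses, up to a sign $\varepsilon$ coming from the shuffle that moves $O$ to the left of $E$ inside $\{1,\ldots,2n\}$, to an $n \times n$ determinant indexed by $O \times E$.

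Setting $u_k := q^{\mu_k/2}$, the reduced matrix for \eqref{Eq_int1} has entries $u_{o_i} u_{e_j} q^{-1/2}/(u_{e_j}^2 - u_{o_i}^2)$. Pulling $u_{o_i}$ from each row, $u_{e_j}$ from each column and $q^{-1/2}$ globally leaves a classical Cauchy determinant in the variables $u_{o_i}^2, u_{e_j}^2$. For \eqref{Eq_int2} the entries read $(u_{e_j}^2 + u_{o_i}^2)/(u_{e_j}^2 - u_{o_i}^2)$; writing $(A+B)/(A-B) = 1 + 2B/(A-B)$ and invoking the matrix-determinant lemma expresses this determinant as the same Cauchy determinant plus a rank-one correction. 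This rank-one correction is what ultimately produces the prefactor $(1 + q^{n + 2(b(\la') - b(\la))})/(1 + q^n)$ appearing in \eqref{Eq_int2}.

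It remains to convert the Cauchy-type products in the $q^{\mu_k}$ into the hook/content polynomials on the right-hand side. Under the standard correspondence between pairs $(o_i, e_j)$ (with $\mu_{o_i}, \mu_{e_j}$ of opposite parity) and boxes of $\la$ of prescribed content parity, the ``mixed parity'' factors $\prod_{i,j}(u_{e_j}^2 - u_{o_i}^2)$ re-assemble into the ratio $C_\la^{\mathrm{e}}(q^{2n};q)/C_\la^{\mathrm{o}}(q^{2n};q)$, while the ``same parity'' Vandermondes $\prod_{i<j}(u_{o_j}^2 - u_{o_i}^2)$ and $\prod_{i<j}(u_{e_j}^2 - u_{e_i}^2)$ assemble into $H_\la^{\mathrm{o}}(q)/H_\la^{\mathrm{e}}(q)$; cardinalities match by Lemma~\ref{Lem_2core}(2). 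Every residual $q$-monomial prefactor (from the global $q^{-n/2}$ in \eqref{Eq_P1}, from the prefactors pulled out when rewriting each $q^a - q^b$ as $\pm q^{\min(a,b)}(1 - q^{|a-b|})$, and from the shuffle sign $\varepsilon$) must then be identified with $q^{b(\la')}$ for \eqref{Eq_int1} and with $q^{b(\la)}$ for \eqref{Eq_int2} by invoking the two formulas of Lemma~\ref{Lem_n1}.

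The main obstacle is precisely this $q$-exponent bookkeeping: the various monomial shifts must conspire to reproduce the statistic $b$, and the two expressions of Lemma~\ref{Lem_n1}, which rewrite $b(\la)$ and $b(\la')$ as a sum over the shifted diagram $\la + \delta$ minus a correction term over pairs $i<j$, are tailored exactly to absorb these contributions. The second identity \eqref{Eq_int2} carries the additional burden of matching both the shape and the $q$-power inside the rank-one correction with the factor $1 + q^{n + 2(b(\la') - b(\la))}$, which requires Lemma~\ref{Lem_n1} to be applied once for the ``main'' Cauchy part and once for the correction.
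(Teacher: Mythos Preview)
Your overall strategy matches the paper's: reduce the $2n\times 2n$ Pfaffian to an $n\times n$ determinant via the parity split of the $\mu_i$, evaluate that determinant in closed form, identify the resulting products with the hook/content polynomials, and match the residual $q$-exponent using Lemma~\ref{Lem_n1}. Two differences are worth noting. First, for \eqref{Eq_int2} the paper evaluates the determinant $\det\big((1+q^{\mu_i-\mu_j})/(1-q^{\mu_i-\mu_j})\big)$ in one stroke via Chu's generalised Cauchy double alternant
\[
\det_{1\le i,j\le n}\!\bigg(\frac{bx_i+cy_j}{x_i+y_j}\bigg)
=(b-c)^{n-1}\bigg(b\prod_i x_i+(-1)^{n-1}c\prod_i y_i\bigg)
\frac{\prod_{i<j}(x_i-x_j)(y_i-y_j)}{\prod_{i,j}(x_i+y_j)},
\]
rather than your matrix-determinant-lemma decomposition; this is cosmetic, and either route produces the same two-term numerator that becomes $1+q^{n+2(b(\la')-b(\la))}$. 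Second, and more substantively, your claimed separation ``mixed-parity factors $\to C_\la^{\mathrm e}/C_\la^{\mathrm o}$, same-parity Vandermondes $\to H_\la^{\mathrm o}/H_\la^{\mathrm e}$'' is not the actual mechanism---there is no clean bijective split of that kind. The paper instead takes the single identity
\[
\frac{C_\la(q^{2n};q)}{H_\la(q)}
=\frac{\prod_{1\le i<j\le 2n}(1-q^{\mu_i-\mu_j})}{\prod_{i=1}^{2n}(q;q)_{2n-i}}
\]
and restricts each side to the factors $(1-q^k)$ with $k$ even versus $k$ odd; this yields the combined ratio $C_\la^{\mathrm e}H_\la^{\mathrm o}/(C_\la^{\mathrm o}H_\la^{\mathrm e})$ at once, together with exactly the prefactor $\prod_i(1-q^{2i-1})^{2n-2i+1}/(1-q^{2i})^{2n-2i}$ already present in \eqref{Eq_Pfaff}. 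With that correction your outline is complete.
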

\begin{proof}
Since the structure of the Pfaffians is similar, we focus on the second
identity, and evaluate \eqref{Eq_P2}.

Fix a partition $\la\in\mathscr{P}_{2n}$ with empty $2$-core.
Define the set $J\subseteq\{1,\dots,2n\}$ as the collection of integers $j$ 
for which column $j$ has a nonzero entry in the first row, and
set $I:=\{1,\dots,2n\}\setminus J$.
Since $\twocore{\la}=0$ it follows that $\abs{I}=\abs{J}=n$.
The elements of $I$ and $J$ are labeled by $i_k$ and $j_k$ respectively,
where $1\leq k\leq n$ and ordered naturally.
With this established we define the $n\times n$ matrix $M$ with entries
$M_{k,\ell}$ by 
\[
M_{k,\ell}:=
\frac{1+q^{\la_{i_k}-\la_{j_\ell}+j_{\ell}-i_k}}
{1-q^{\la_{i_k}-\la_{j_\ell}+j_{\ell}-i_k}}.
\]
The Pfaffian in \eqref{Eq_P2} may be expressed in terms of the determinant 
of $M$. Indeed, by pushing the rows with indices in $J$ to the right we 
see that
\begin{align*}
&\pf_{1\leq i,j\leq 2n}\bigg(\frac{1+q^{\la_i-\la_j+j-i}}
{1-q^{\la_i-\la_j+j-i}}\chi(\text{$\la_i-\la_j+j-i$ odd})\bigg)
\\&\qquad\qquad\qquad\qquad
=(-1)^{\binom{n}{2}+\sum_{j\in J}j}
\pf\begin{pmatrix}0 & M \\ -M^t & 0\end{pmatrix} \\
&\qquad\qquad\qquad\qquad=(-1)^{\sum_{j\in J}j}\det M.
\end{align*}
The determinant may be evaluated simply by applying the following 
generalisation of Cauchy's double alternant which may be found in 
\cite[Example~3.1; $a=0$]{Chu01}:
\begin{align*}
\det_{1\leq i,j\leq n}\bigg(\frac{bx_i+cy_j}{x_i+y_j}\bigg)
&=(b-c)^{n-1}\bigg(b\prod_{i=1}^nx_i+(-1)^{n-1}c\prod_{i=1}^ny_i\bigg)\\
&\qquad\qquad\times
\frac{\prod_{1\leq i<j\leq n}(x_i-x_j)(y_i-y_j)}{\prod_{i,j=1}^n(x_i+y_j)}.
\end{align*}
We apply this with
$(b,c,x_k,y_\ell)\mapsto(-1,1,q^{\la_{i_k}-i_k},-q^{\la_{j_\ell}-j_\ell})$ for
$1\leq k,\ell\leq n$.
After some elementary manipulations the evaluation may now be expressed as 
\begin{align*}
&I_\la^{(n)}(1,q^2;q)\\
&\quad= \frac{\prod_{i\in I}q^{\la_i-i}+\prod_{j\in J}q^{\la_j-j}}{1+q^n}
\prod_{i=1}^n\frac{(1-q^{2i-1})^{2n-2i+1}}{(1-q^{2i})^{2n-2i}} \\
&\quad\quad\times
\prod_{\substack{1\leq i<j\leq 2n\\\text{$\la_i-\la_j+j-i$ even}}}
\frac{1-q^{\la_i-\la_j+j-i}}{q^{\la_j-j}}
\prod_{\substack{1\leq i<j\leq 2n\\\text{$\la_i-\la_j+j-i$ odd}}}
\frac{q^{\la_j-j}}{1-q^{\la_i-\la_j+j-i}}.
\end{align*}
The terms of the form $1-q^x$ can be simplified thanks to the identity
\cite[p.~10--11]{Macdonald95}
\[
\frac{C_\la(q^{2n};q)}{H_\la(q)}
=\prod_{s\in \la}\frac{1-q^{n+c(s)}}{1-q^{h(s)}}
=\frac{\prod_{1\leq i<j\leq n}1-q^{\la_i-\la_j+j-i}}{\prod_{i=1}^n(q;q)_i},
\]
where $l(\la)\leq n$. Restricting all products to even/odd exponents
implies that
\begin{align*}
&\frac{C_\la^{\mathrm{e}}(q^{2n};q)H_\la^{\mathrm{o}}(q)}
{C_\la^{\mathrm{o}}(q^{2n};q)H_\la^{\mathrm{e}}(q)} \\
&\quad=\prod_{\substack{1\leq i<j\leq 2n\\\text{$\la_i-\la_j+j-i$ even}}}
(1-q^{\la_i-\la_j+j-i})
\prod_{\substack{1\leq i<j\leq 2n\\\text{$\la_i-\la_j+j-i$ odd}}}
\frac{1}{1-q^{\la_i-\la_j+j-i}}\\
&\qquad\quad\times
\prod_{i=1}^n\frac{(1-q^{2i-1})^{2n-2i+1}}{(1-q^{2i})^{2n-2i}}.
\end{align*}
It remains to show that the powers of $q$ agree in the prefactor.
Since
\[
\prod_{i\in I}q^{\la_i-i}+\prod_{j\in J}q^{\la_j-j}
=\prod_{\substack{i=1\\\text{$\la_i-i$ even}}}^{2n}q^{\la_i-i}+
\prod_{\substack{i=1\\\text{$\la_i-i$ odd}}}^{2n}q^{\la_i-i},
\]
this may be reduced to the pair of identities
\begin{equation*}
\varsigma(\la)=\sum_{\substack{i=1\\\text{$\la_i-i$ even}}}^{2n}(\la_i-i)
+\sum_{1\leq i<j\leq 2n}(-1)^{\la_i-\la_j+j-i}(\la_j-j),
\end{equation*}
and
\begin{equation*}
n+2\varsigma(\la')-2\varsigma(\la) =
\sum_{\substack{i=1\\\text{$\la_i-i$ odd}}}^{2n}(\la_i-i)-
\sum_{\substack{i=1\\\text{$\la_i-i$ even}}}^{2n}(\la_i-i).
\end{equation*}
In the first of these write
\begin{align*}
\sum_{\substack{i=1\\\text{$\la_i-i$ even}}}^{2n}(\la_i-i)
&=\sum_{(i,j)\in\la+\delta}(-1)^{\la_i-i-j+1}(\la_i-i)
+\sum_{i=1}^{2n}(\la_i-i) \\
&=\varsigma(\la)+\sum_{1\leq i<j\leq 2n}(-1)^{\la_i-\la_j+j-i}(\la_i-i)
+\sum_{i=1}^{2n}(\la_i-i),
\end{align*}
where in the second equality we have applied \eqref{Eq_alternativeForm}
from Lemma~\ref{Lem_n1}.
Since
\begin{align*}
&\sum_{1\leq i<j\leq 2n}(-1)^{\la_i-\la_j+j-i}(\la_i-i)
+\sum_{1\leq i<j\leq 2n}(-1)^{\la_i-\la_j+j-i}(\la_i-i)
+\sum_{i=1}^{2n}(\la_i-i) \\
&\qquad\qquad=\sum_{i,j=1}^{2n}(-1)^{\la_i-\la_j+j-i}(\la_i-i) \\
&\qquad\qquad=0,
\end{align*}
the first identity follows.
For the second identity, a similar rewriting, now using \eqref{Eq_conjForm}
of Lemma~\ref{Lem_n1}, shows us that
\begin{align*}
&\sum_{\substack{i=1\\\text{$\la_i-i$ odd}}}^{2n}(\la_i-i)-
\sum_{\substack{i=1\\\text{$\la_i-i$ even}}}^{2n}(\la_i-i)\\
&\qquad=-2\sum_{(i,j)\in\la+\delta}(-1)^{\la_i-i-j+1}(\la_i-i)
-\sum_{i=1}^{2n}(\la_i-i) \\
&\qquad=-2\varsigma(\la)-\abs{\la}+2n^2+n
-2\sum_{1\leq i<j\leq 2n}(-1)^{\la_i-\la_j+j-i}(\la_i-i) \\
&\qquad=n+2\varsigma(\la')-2\varsigma(\la). 
\end{align*}
This finishes the evaluation of \eqref{Eq_P2}. 
The evaluation of \eqref{Eq_P1} is almost identical except one directly
applies \eqref{Eq_conjForm} of Lemma~\ref{Lem_n1} to compute the exponent
of $q$ in the prefactor.
\end{proof}

\section{Bounded Littlewood identities}\label{Sec_proofs}
Here we use the integral evaluations of the previous section to prove
a bounded analogue of Theorem~\ref{Thm_q=t}.
This is followed by proofs of the theorem and of Corollary~\ref{Cor}.

\subsection{A bounded analogue of Theorem~\ref{Thm_q=t}}

Bounded Littlewood identities are generalisations of ordinary Littlewood
identities in which the largest part of the indexing partition has an 
upper bound, say $m$, such that sending $m$ to infinity recovers an ordinary
(unbounded) Littlewood identity.
The first example of such an identity was discovered by 
Macdonald \cite[\S1.5]{Macdonald79} where he used a bounded analogue of 
\eqref{Eq_classical1} to prove the MacMahon and Bender--Knuth 
conjectures on plane partitions \cite{BK72,MacMahon99}.
Bounded analogues of the remaining two classical identities 
\eqref{Eq_classical2} and \eqref{Eq_classical3} were obtained by
D\'esarm\'enien, Proctor and Stembridge \cite{Des86,Proctor90,Stembridge90}
and Okada \cite{Okada98} respectively.
A host of other bounded identities for Hall--Littlewood and Macdonald 
polynomials may be found in \cite{RW21} and references therein.
For further discussion of the history of bounded Littlewood identities
see \cite{HKKO23}.
We now state the bounded analogue of Theorem~\ref{Thm_q=t}.
\begin{theorem}\label{Thm_bounded}
For nonnegative integers $m$ and $n$,
\begin{multline}\label{Eq_B1}
\sum_{\substack{\la\\\twocore{\la}=0}} 
q^{\varsigma(\la')}\frac{C_\la^{\mathrm{e}}(q^{-2m};q)
H_\la^{\mathrm{o}}(q)}{C_\la^{\mathrm{o}}(q^{-2m};q)H_\la^{\mathrm{e}}(q)}
s_\la(x) \\
=(x_1\cdots x_n)^m K_{(m^n)}\big(x;q,q;q^{1/2},-q^{1/2},q^{1/2},-q^{1/2}\big),
\end{multline}
and
\begin{multline}\label{Eq_B2}
\sum_{\substack{\la\\\twocore{\la}=0}} 
\frac{q^{2\varsigma(\la')-\varsigma(\la)}+q^{m+\varsigma(\la)}}{1+q^m}\,
\frac{C_\la^{\mathrm{e}}(q^{-2m};q)
H_\la^{\mathrm{o}}(q)}{C_\la^{\mathrm{o}}(q^{-2m};q)H_\la^{\mathrm{e}}(q)}
s_\la(x) \\
=(x_1\cdots x_n)^m K_{(m^n)}(x;q,q;1,-1,q,-q).
\end{multline}
\end{theorem}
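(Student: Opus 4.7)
The plan, following the strategy of \cite{RW21}, is to expand the right-hand sides of \eqref{Eq_B1} and \eqref{Eq_B2} in the Schur basis of $\La_n^{\mathrm{BC}}$ (via the pullback $s_\la(x):=s_\la(x_1^\pm,\ldots,x_n^\pm)$) and to identify the coefficients with those on the left-hand sides through the integral evaluations proved in Section~\ref{Sec_integral}. Setting $q=t$ in Proposition~\ref{Prop_coef}, so that $P_\la(x;q,q)=s_\la(x)$, the coefficient of $s_\la(x)$ in $(x_1\cdots x_n)^m K_{(m^n)}(x;q,q;t_0,t_1,t_2,t_3)$ equals $(-1)^{\abs{\la}}I_K^{(m)}(s_{\la'};q,q;t_0,t_1,t_2,t_3)$. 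Under the two parameter strings prescribed in Theorem~\ref{Thm_bounded} these virtual Koornwinder integrals become $(-1)^{\abs{\la}}I^{(m)}_{\la'}(q,q;q)$ and $(-1)^{\abs{\la}}I^{(m)}_{\la'}(1,q^2;q)$ respectively.

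By the vanishing part of the proposition of Section~\ref{Sec_integral}, these coefficients vanish unless $\twocore{\la'}=0$, equivalently $\twocore{\la}=0$; for such $\la$ the size $\abs{\la}$ is automatically even (the diagram is domino-tileable), so the sign $(-1)^{\abs{\la}}$ is trivial and the sum on the left may indeed be restricted to partitions with empty $2$-core. Inserting \eqref{Eq_int1} and \eqref{Eq_int2} with $\la\mapsto\la'$, $n\mapsto m$, and using the conjugation-invariance $H_{\la'}^{\mathrm{e}/\mathrm{o}}(q)=H_\la^{\mathrm{e}/\mathrm{o}}(q)$, one is left with expressions involving $C_{\la'}^{\mathrm{e}/\mathrm{o}}(q^{2m};q)$ and powers of $q$ in $b(\la)$ and $b(\la')$. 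I then transform $C_{\la'}^{\mathrm{e}/\mathrm{o}}(q^{2m};q)$ into $C_\la^{\mathrm{e}/\mathrm{o}}(q^{-2m};q)$ via the involution $(i,j)\leftrightarrow(j,i)$ between squares of $\la$ and $\la'$ together with $1-q^{-y}=-q^{-y}(1-q^y)$; the spurious sign and the factor $q^{-2m(N_{\mathrm{e}}-N_{\mathrm{o}})}$ disappear because $\twocore{\la}=0$ forces $N_{\mathrm{e}}=N_{\mathrm{o}}$, each domino covering one square of each parity of $i+j$. What emerges is
\[
\frac{C_\la^{\mathrm{e}}(q^{-2m};q)}{C_\la^{\mathrm{o}}(q^{-2m};q)}=q^{A_{\mathrm{o}}-A_{\mathrm{e}}}\frac{C_{\la'}^{\mathrm{e}}(q^{2m};q)}{C_{\la'}^{\mathrm{o}}(q^{2m};q)},\qquad A_{\mathrm{e}/\mathrm{o}}:=\sum_{\substack{(i,j)\in\la\\ i+j\;\text{even/odd}}}(i-j).
\]

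The remaining ingredient, and the principal obstacle, is the arithmetic identity $b(\la)-b(\la')=A_{\mathrm{o}}-A_{\mathrm{e}}$ (valid when $\twocore{\la}=0$). Granting it, the power of $q$ produced by the $C$-conjugation combines with the $q^{b(\la)}$ of \eqref{Eq_int1} to give $q^{b(\la')}$, delivering \eqref{Eq_B1}; for \eqref{Eq_B2} the same identity turns $q^{b(\la')}\bigl(1+q^{m+2(b(\la)-b(\la'))}\bigr)/(1+q^m)$, after multiplication by $q^{b(\la')-b(\la)}$, into the required $\bigl(q^{2b(\la')-b(\la)}+q^{m+b(\la)}\bigr)/(1+q^m)$. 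I would verify this residual identity by the same diagram-rearrangement technique as in the proof of Lemma~\ref{Lem_n1}, writing both $b(\la)$ and $b(\la')$ as parity-weighted sums over $\la+\delta$ and comparing the even- and odd-parity contributions column by column.
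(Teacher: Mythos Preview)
Your proposal is correct and follows essentially the same route as the paper: apply Proposition~\ref{Prop_coef} at $q=t$ to read off the Schur coefficients of the right-hand sides as $(-1)^{\abs{\la}}I^{(m)}_{\la'}(\,\cdot\,)$, invoke the vanishing and the evaluations \eqref{Eq_int1}--\eqref{Eq_int2}, and then pass from $C_{\la'}^{\mathrm{e}/\mathrm{o}}(q^{2m};q)$ to $C_\la^{\mathrm{e}/\mathrm{o}}(q^{-2m};q)$. The only divergence is in this last step: the paper simply quotes \cite[Lemma~2.3]{LRW20} to obtain the conjugation relation \eqref{Eq_conjugate}, whereas you rederive it by the factorwise substitution $1-q^{-y}=-q^{-y}(1-q^y)$ together with $N_{\mathrm{e}}=N_{\mathrm{o}}$, reducing everything to the combinatorial identity $b(\la)-b(\la')=A_{\mathrm{o}}-A_{\mathrm{e}}$ for partitions with empty $2$-core. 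That residual identity is exactly the $q=t$ content of \cite[Lemma~2.3]{LRW20}, so your deferred verification via the technique of Lemma~\ref{Lem_n1} is not an extra obstacle but a re-proof of an already available result; once noted, your argument and the paper's are the same.
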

These identities are indeed bounded since $C_\la^{\mathrm{e}}(q^{-2m};q)$
vanishes if $\la_1>2m$.
Since, by \cite[Lemma~4.1]{LRW20}, the Koornwinder polynomials on the right
reduce to classical group characters for $q=0$, one recovers the previously 
mentioned D\'esarm\'enien--Proctor--Stembridge and Okada identities 
respectively in this case.
The Koornwinder polynomials for $q=t$ on the right-hand side may alternatively 
be expressed as a ratio of determinants of Askey--Wilson polynomials
\cite{AW85}; see, e.g., \cite[Definition~4.1]{CMW17}.
This, however, does not seem to shed light on a more explicit expression for
the evaluation of these sums.
In particular, the specialisations of $K_{(m^n)}$ above are not contained
in \cite[Lemma~4.1]{LRW20}.

The following argument is sketched in \cite[\S9]{LRW20}, but we give the 
details in the Schur case.
Assuming the Macdonald polynomial version of the vanishing integrals
\cite[Conjecture~9.2]{LRW20}, the same argument gives the conjectural 
Littlewood identities.
\begin{proof}[Proof of Theorem~\ref{Thm_bounded}]
The goal is to find an expression for the coefficient of $s_\la(x)$ in the
Schur expansion of the right-hand side. 
By Proposition~\ref{Prop_coef} this coefficient is
\[
f_\la^{(m)}(x;q,q,t_0,t_1,t_2,t_3)
=(-1)^{\abs{\la}}I_K^{(m)}(s_{\la'}(x);q,q;t_0,t_1,t_2,t_3).
\]
If we specialise $(t_0,t_1,t_2,t_3)=(q^{1/2},-q^{1/2},q^{1/2},-q^{1/2})$
then this reduces to
\[
f_\la^{(m)}\big(x;q,q;q^{1/2},-q^{1/2},q^{1/2},-q^{1/2}\big)
=(-1)^{\abs{\la}}I_{\la'}^{(m)}(q,q;q).
\]
The integral on the right is \eqref{Eq_int1}, as desired, and vanishes 
unless $\twocore{\la}=0$. In this case the sign disappears since $\abs{\la}$ is
even and we obtain 
\[
(-1)^{\abs{\la}}I_{\la'}^{(m)}(q,q;q)
=q^{\varsigma(\la)}\frac{C_{\la'}^{\mathrm{e}}(q^{2m};q)H_{\la'}^{\mathrm{o}}(q)}
{C_{\la'}^{\mathrm{o}}(q^{2m};q)H_{\la'}^{\mathrm{e}}(q)}.
\]
By \cite[Lemma~2.3]{LRW20} we may alternatively express this as
\begin{equation}\label{Eq_conjugate}
q^{\varsigma(\la)}\frac{C_{\la'}^{\mathrm{e}}(q^{2m};q)H_{\la'}^{\mathrm{o}}(q)}
{C_{\la'}^{\mathrm{o}}(q^{2m};q)H_{\la'}^{\mathrm{e}}(q)}
=q^{\varsigma(\la')}\frac{C_\la^{\mathrm{e}}(q^{-2m};q)H_\la^{\mathrm{o}}(q)}
{C_\la^{\mathrm{o}}(q^{-2m};q)H_\la^{\mathrm{e}}(q)}
\end{equation}
This establishes \eqref{Eq_B1}.
For \eqref{Eq_B2} the same procedure applies with the substitution
$(t_0,t_1,t_2,t_3)=(1,-1,q,-q)$ and by using the integral \eqref{Eq_int2}.
\end{proof}

\subsection{Proof of Theorem~\ref{Thm_q=t}}
With the bounded identities established we may take the 
$m\to\infty$ limit of both identities to obtain their unbounded counterparts.
For the Koornwinder side we use \eqref{Eq_m-limit} with $(\la,q,t)=(0,q,q)$ 
and $(t_0,t_1,t_2,t_3)=(q^{1/2},-q^{1/2},q^{1/2},-q^{1/2})$ or 
$(t_0,t_1,t_2,t_3)=(1,-1,q,-q)$. In the case of \eqref{Eq_B1} this yields
\begin{align*}
&\lim_{m\to \infty} (x_1\dots x_n)^m
K_{(m^n)}\big(x;q,q;q^{1/2},-q^{1/2},q^{1/2},-q^{1/2}\big) \\
&\quad=
\prod_{i=1}^n\frac{(q^{1/2}x_i,-q^{1/2}x_i,q^{1/2}x_i,-q^{1/2}x_i;q)_\infty}
{(x_i^2;q)_\infty}
\prod_{1\leq i<j\leq n}\frac{1}{1-x_ix_j} \\
&\quad=\prod_{i=1}^n\frac{(qx_i^2;q^2)_\infty}{(x_i^2;q^2)_\infty}
\prod_{1\leq i<j\leq n}\frac{1}{1-x_ix_j},
\end{align*}
where we have used
\[
(a,-a;q)_\infty=(a^2;q^2)_\infty.
\]
For the limit of the summand we use it in conjugate form \eqref{Eq_conjugate}
so that
\[
\lim_{m\to\infty}
q^{\varsigma(\la)}\frac{C_{\la'}^{\mathrm{e}}(q^{2m};q)H_\la^{\mathrm{o}}(q)}
{C_{\la'}^{\mathrm{o}}(q^{2m};q)H_\la^{\mathrm{e}}(q)}
=q^{\varsigma(\la)}\frac{H_\la^{\mathrm{o}}(q)}{H_\la^{\mathrm{e}}(q)}.
\]
Thus we have proved \eqref{Eq_L1}.
As before the same procedure yields \eqref{Eq_L2}.

\subsection{Proof of Corollary~\ref{Cor}}
In order to obtain Corollary~\ref{Cor} we take $q\to 1$ in either 
\eqref{Eq_L1} or \eqref{Eq_L2}.
Let $(a;q)_n:=\prod_{k=0}^{n-1}(1-aq^k)$.
Then we may take the limit of the product-side of \eqref{Eq_L1} by using
\begin{align*}
\lim_{q\to 1}\frac{(qx_i^2;q^2)_\infty}{(x_i^2;q^2)_\infty}
&=\lim_{q\to 1}\sum_{n=0}^\infty\frac{(q;q^2)_n}{(q^2;q^2)_n}x_i^{2n} \\
&=\sum_{n=0}^\infty\frac{1\cdot 3\cdots (2n-1)}{2\cdot 4\cdots 2n}x_i^{2n} \\
&=\frac{1}{(1-x_i^2)^{1/2}},
\end{align*}
where in the first line we have applied the $q$-binomial theorem 
\cite[Equation~(1.3.2)]{GR04}:
\[
\sum_{n=0}^\infty\frac{(a;q)_n}{(q;q)_n}z^n
=\frac{(az;q)_\infty}{(z;q)_\infty}.
\]
The $q\to1$ limit of the product-side of \eqref{Eq_L2} gives the same result.
The limit of either sum follows from the characterisation of partitions with
empty $2$-core in Lemma~\ref{Lem_2core}, namely that 
$\abs{\He{\la}}=\abs{\Ho{\la}}$.

\subsection*{Acknowledgements}
I thank Christian Krattenthaler and Ole Warnaar for many useful 
discussions and suggestions, and for their encouragement.

\end{document}